\newtheorem{theorem}{Theorem}[section]
\newtheorem{lemma}[theorem]{Lemma}
\newtheorem{observation}[theorem]{Observation}
\newtheorem{example}[theorem]{Example}
\theoremstyle{definition}
\newtheorem{definition}[theorem]{Definition}
\theoremstyle{remark}
\newtheorem{remark}[theorem]{Remark}
\newcommand\remove[1]{}
\def\mh{\mathcal{H}}
\def\f2{\mathbb{F}_2}
\def\lip{\hskip0.02cm{\rm Lip}\hskip0.01cm}
\newcommand{\1}{\mathbf{1}}
\newcommand{\tc}{{\rm TC}\hskip0.02cm}
\newcommand{\tp}{{\rm TP}\hskip0.02cm}
\begin{document}

\title{Isometric copies of $\ell_\infty^n$ and $\ell_1^n$ in transportation cost spaces on finite metric spaces}

\author{Seychelle~S.~Khan, Mutasim~Mim,  and Mikhail~I.~Ostrovskii}

\date{\today}
\maketitle

\hfill{\sl To Victor Lomonosov}
\bigskip

\noindent{\bf Abstract.} Main results: (a) If a metric space
contains $2n$ elements, the transportation cost space on it
contains a $1$-complemented isometric copy of $\ell_1^n$. (b) An
example of a finite metric space whose transportation cost space
contains an isometric copy of $\ell_\infty^4$. Transportation cost
spaces are also known as Arens-Eells, Lipschitz-free, or
Wasserstein $1$ spaces.
\medskip

\noindent{\bf Keywords:} Arens-Eells space, Banach space, duality
in linear programming, earth mover distance, Edmonds matching
algorithm, Kantorovich-Rubinstein distance, Lipschitz-free space,
perfect matching, transportation cost, Wasserstein distance
\medskip

\noindent{\bf 2010 Mathematics Subject Classification.} Primary:
52A21; Secondary: 05C70, 30L05, 46B07, 46B85, 91B32
\medskip

\begin{large}

%\tableofcontents

\section{Introduction}

The introduced below notions go back at least to Kantorovich and
Gavurin \cite{KG49}. We use the terminology and notation of
\cite{OO19}. History of the notions introduced below as well as
related terminology (Arens-Eells space, earth mover distance,
Kantorovich-Rubinstein distance, Lipschitz-free space, Wasserstein
distance) is discussed in \cite[Section 1.6]{OO19} and references
therein.

\begin{definition}
Let $(M,d)$ be a metric space. Consider a real-valued finitely
supported function $f$ on $M$ with a zero sum, that is,

\begin{equation}\label{E:Sum0}\sum_{v\in M}f(v)=0.\end{equation}

A natural and important interpretation of such a function is the
following: $f(v)>0$ means that $f(v)$ units of a certain product
are produced or stored at point $v$; $f(v)<0$ means that $(-f(v))$
units of the same product are needed at $v$.  The number of units
can be any real number.  With this in mind, $f$ may be regarded as
a {\it transportation problem}. For this reason, we denote the
vector space of all real-valued functions finitely supported on
$M$ with a zero sum by $\tp(M)$, where $\tp$ stands for {\it
transportation problems}.

One of the standard norms on the vector space $\tp(M)$ is related
to the {\it transportation cost} and is defined in the following
way.\medskip

A {\it transportation plan} is a plan of the following type: we
intend to deliver

\begin{itemize}

\item $a_1$ units of the product from $x_1$ to $y_1$,

\item $a_2$ units of the product from $x_2$ to $y_2$,

\item \dots

\item $a_n$ units of the product from $x_n$ to $y_n$,

\end{itemize}
where $a_1,\dots,a_n$ are nonnegative real numbers, and
$x_1,\dots,x_n,y_1,\dots,y_n$ are elements of $M$, which do not
have to be distinct.\medskip

This transportation  plan is said to  {\it solve the
transportation problem} $f$ if
\begin{equation}\label{E:TranspPlan}f=a_1(\1_{x_1}-\1_{y_1})+a_2(\1_{x_2}-\1_{y_2})+\dots+
a_n(\1_{x_n}-\1_{y_n}),\end{equation} where $\1_u(x)$ for $u\in M$
is the {\it indicator function} defined as:
\[\1_u(x)=\begin{cases} 1 &\hbox{ if }x=u,\\ 0 &\hbox{ if }x\ne u.
\end{cases} \]

 The {\it cost} of  transportation plan
\eqref{E:TranspPlan} is
 defined as $\sum_{i=1}^n a_id(x_i,y_i)$. We introduce the
{\it transportation cost norm} (or just {\it transportation cost})
$\|f\|_{\tc}$ of a transportation problem $f$ as the minimal cost
of transportation plans solving $f$. It is easy to see that the
transportation plan of the minimum cost exists. We introduce the
{\it transportation cost space} $\tc(M)$ on $M$ as the completion
of $\tp(M)$ with respect to the norm $\|\cdot\|_\tc$.
\end{definition}

It is worth mentioning that the norm of an element in $\tc(M)$ can
be computed using Linear Programming, see \cite{MG07} and
\cite{Sch86}, see also related historical comments in
\cite[pp.~221--223]{Sch86}.
\medskip

Arens and Eells \cite{AE56} observed that if we pick a  {\it base
point} $O$ in the space $M$, then the {\it canonical embedding}\,
of $M$ into $(\tp(M),\|\cdot\|_\tc)$ given by the formula:
\begin{equation}\label{E:EmbBasePt} v\mapsto \1_v-\1_O\end{equation}
is an isometric embedding. This observation can be easily derived
from the following characterization of optimal transportation
plans.
\medskip

Let $0\le C<\infty$. A real-valued  function $l$ on a metric space
$(M,d)$ is called $C$-{\it Lipschitz} if
\[\forall x,y\in M\quad |l(x)-l(y)|\le Cd(x,y).\]
The {\it Lipschitz constant} of a function $l$ on a metric space
containing at least two points is defined as
\[\lip(l)=\max_{x,y\in M,~~ x\ne y}\frac{|l(x)-l(y)|}{d(x,y)}.\]

\begin{theorem}[\cite{KG49}]\label{T:WitnLip} A plan
\begin{equation}f=a_1(\1_{x_1}-\1_{y_1})+a_2(\1_{x_2}-\1_{y_2})+\dots+
a_n(\1_{x_n}-\1_{y_n})\end{equation} is optimal if and only if
there exist a $1$-Lipschitz real-valued function $l$ on $M$ such
that \begin{equation}\label{E:Kan}
l(x_i)-l(y_i)=d(x_i,y_i)\end{equation} for all pairs $x_i,y_i$ for
which $a_i>0$.
\end{theorem}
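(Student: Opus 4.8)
The plan is to recognize this statement as the complementary-slackness characterization of optimality for the linear program that computes $\|f\|_{\tc}$, and to prove the two implications separately, with the forward (sufficiency) direction being elementary weak duality and the reverse (necessity) direction relying on strong linear-programming duality.

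For the \emph{sufficiency} direction, suppose a $1$-Lipschitz $l$ with $l(x_i)-l(y_i)=d(x_i,y_i)$ for all $i$ with $a_i>0$ is given. I would first establish weak duality: for \emph{any} plan $\sum_j b_j(\1_{u_j}-\1_{w_j})$ solving the transportation problem $f$ and any $1$-Lipschitz $g$, the cost satisfies
$$\sum_j b_j d(u_j,w_j)\ \ge\ \sum_j b_j\big(g(u_j)-g(w_j)\big)=\sum_{v\in M} g(v)f(v),$$
where the inequality uses $g(u_j)-g(w_j)\le d(u_j,w_j)$ together with $b_j\ge 0$, and the last equality is just regrouping \eqref{E:TranspPlan}. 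Thus $\sum_v g(v)f(v)$ is a lower bound on the cost of every plan. Applying this with $g=l$ and invoking the assumed equalities, the cost of the given plan equals $\sum_i a_i(l(x_i)-l(y_i))=\sum_v l(v)f(v)$, so it meets the lower bound and is therefore optimal.

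For the \emph{necessity} direction, I would pass to the finite set $S$ consisting of $\supp(f)$ together with all the points $x_i,y_i$, and write $f=f^+-f^-$ as the difference of its positive and negative parts. The value $\|f\|_{\tc}$ is then the optimum of the finite Monge--Kantorovich transportation LP with sources $f^+$, sinks $f^-$, and costs $d(u,w)$; its LP dual, after the standard substitution that replaces the two price vectors by a single function, is exactly $\max\{\sum_{v} l(v)f(v): l \text{ is } 1\text{-Lipschitz on } S\}$. Strong duality yields an optimal $1$-Lipschitz $l$ on $S$ with $\|f\|_{\tc}=\sum_v l(v)f(v)$, which I would extend to all of $M$ without increasing its Lipschitz constant via the McShane formula $l(x)=\min_{z\in S}(l(z)+d(x,z))$. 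Since the given plan is optimal, its cost equals $\|f\|_{\tc}=\sum_v l(v)f(v)=\sum_i a_i(l(x_i)-l(y_i))$, so $\sum_i a_i\big(d(x_i,y_i)-(l(x_i)-l(y_i))\big)=0$; as every summand is nonnegative (by $a_i\ge 0$ and the Lipschitz bound), each vanishes, forcing $l(x_i)-l(y_i)=d(x_i,y_i)$ whenever $a_i>0$.

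The main obstacle I expect lies entirely in the necessity direction: justifying that $\|f\|_{\tc}$ is computed by a \emph{finite} LP, so that strong duality applies cleanly, requires checking that restricting to plans supported on $S$ does not change the optimum (an intermediate point can always be bypassed by the triangle inequality), and the conversion between the two-price dual and a single Lipschitz function, followed by the McShane extension, must be carried out so that the extended $l$ still attains the same dual value on all of $M$. The sufficiency direction, by contrast, is pure weak duality and presents no difficulty.
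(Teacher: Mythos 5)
The paper does not prove this theorem: it is quoted as a known result of Kantorovich and Gavurin with a citation to \cite{KG49}, so there is no in-paper argument to compare yours against. Judged on its own, your proof is the standard linear-programming-duality argument and is correct in outline. The sufficiency half is complete as written: weak duality (any $1$-Lipschitz $g$ gives the lower bound $\sum_v g(v)f(v)$ on the cost of every plan solving $f$, by summation by parts) plus the assumed equalities shows the given plan attains that bound. The necessity half correctly assembles the needed ingredients --- reduction to a finite Monge--Kantorovich LP on $S$, strong duality, conversion of the two price vectors into a single $1$-Lipschitz potential, McShane extension, and complementary slackness --- and you have accurately located the one step that genuinely requires work: showing that the infimum over all transportation plans (which may route mass through intermediate points) equals the optimum of the LP restricted to moving mass from $\supp f^+$ directly to $\supp f^-$. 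That fact is exactly what the paper itself later invokes as \cite[Proposition 3.16]{Wea18} (its condition {\bf (A)} in Section 2), so you may cite it rather than reprove it; with that reference supplied, your argument is complete. One small point worth making explicit when you write it up: after the McShane extension, the inequality $l(x_i)-l(y_i)\le d(x_i,y_i)$ must hold for \emph{all} pairs $x_i,y_i$ appearing in the plan, including those outside $\supp(f)$, which is why your choice to include all $x_i,y_i$ in $S$ (or, equivalently, the global $1$-Lipschitz extension) is not merely cosmetic.
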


The mentioned above observation of Arens and Eells makes
transportation cost spaces an important object in the theory of
metric embeddings, see \cite[Chapter 10]{Ost13} and \cite[Section
1.4]{OO19}. This theory makes it very important to study the
conditions of isometric embeddability of spaces $\ell_\infty^n$
into $\tc(M)$.

Problems on isometric embeddability of spaces  $\ell_1^n$ and
$\ell_\infty^n$ into $\tc(M)$ are also motivated by the following
definitions, the first of which goes back to Kantorovich and
Gavurin \cite{KG49}.

\begin{definition} Let $f_1,\dots,f_n$ be nonzero transportation problems in $\tp(M)$ and $x_1,\dots,x_n$ be their
normalizations, that is, $x_i=f_i/\|f_i\|_\tc$.
\smallskip

We say that transportation problems $f_1,\dots,f_n$ are {\it
completely unrelated}, if \[\left\|\sum_{i=1}^n
a_ix_i\right\|_\tc=\sum_{i=1}^n |a_i|\] for every collection
$\{a_i\}_{i=1}^n$ of real numbers.

We say that transportation problems $f_1,\dots,f_n$ are {\it
completely intertwined}, if \[\left\|\sum_{i=1}^n
a_ix_i\right\|_\tc=\max_{1\le i\le n} |a_i|\] for every collection
$\{a_i\}_{i=1}^n$ of real numbers.
\end{definition}

\begin{remark} The notion of completely unrelated problems has a
natural meaning in applications: we cannot decrease the total cost
by combining the transportation plans for a set of completely
unrelated transportation problems.

The notion of completely intertwined problems describes the very
unusual situation: we have several transportation problems
$\{x_i\}_{i=1}^n$ such that each of them has cost $1$ and the sum
$\sum_{i=1}^n\theta_ix_i$ (of $n$ summands with cost $1$ each) has
cost $1$ for every collection $\theta_i=\pm1$.

It is clear that problems are completely unrelated if and only if
their normalizations are isometrically equivalent to the unit
vector basis of $\ell_1^n$ and problems are completely intertwined
if and only if their normalizations are isometrically equivalent
to the unit vector basis of $\ell_\infty^n$.
\end{remark}

The main goal of this paper is to study embeddability of
$\ell_1^n$ and $\ell_\infty^n$ into $\tc(M)$ for finite metric
spaces $M$. The following theorem is our main result.

\begin{theorem}\label{T:2npts} If a metric space $M$ contains $2n$ elements, then $\tc(M)$ contains a
$1$-complemented subspace isometric to $\ell_1^n$. If the space
$M$ is such that triangle inequalities for all distinct triples in
$M$ are strict, then $\tc(M)$ does not contain a subspace
isometric to $\ell_1^{n+1}$.
\end{theorem}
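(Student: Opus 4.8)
The plan is to prove the two assertions separately, using throughout the Kantorovich--Gavurin duality (Theorem~\ref{T:WitnLip}) and the duality between $\tc(M)$ and $1$-Lipschitz functions that is implicit in it.

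\emph{Existence of $\ell_1^n$.} Fix a minimum weight perfect matching $\{(x_i,y_i)\}_{i=1}^n$ of $M$, the weight of $(x_i,y_i)$ being $d_i:=d(x_i,y_i)$; such a matching exists and is found by Edmonds' algorithm. Put $e_i:=(\1_{x_i}-\1_{y_i})/d_i$, so $\|e_i\|_\tc=1$. I claim $e_1,\dots,e_n$ are completely unrelated. The bound $\|\sum_ia_ie_i\|_\tc\le\sum_i|a_i|$ is automatic, so by duality it suffices to exhibit, for each $\epsilon\in\{-1,+1\}^n$, a $1$-Lipschitz $l_\epsilon$ with $l_\epsilon(x_i)-l_\epsilon(y_i)=\epsilon_id_i$ for all $i$; testing $\sum_ia_ie_i$ against $l_\epsilon$ with $\epsilon_i=\sign a_i$ then gives the reverse inequality. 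Swapping the endpoints of the edges with $\epsilon_i=-1$, the construction of $l_\epsilon$ reduces to assigning real values to the $2n$ matched points that are $1$-Lipschitz among themselves and increase by exactly $d_i$ along each (reoriented) edge, followed by a McShane extension to all of $M$. This is a system of difference constraints, feasible iff its constraint graph has no negative cycle; the only negative arcs are the reversed matching arcs of weight $-d_i$, and a simple negative cycle would, after replacing each connecting path by the direct distance, reroute the matching along an alternating cycle of strictly smaller total length, contradicting minimality. Hence all $l_\epsilon$ exist and $X:=\mathrm{span}\{e_i\}$ is isometric to $\ell_1^n$.

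\emph{$1$-complementation.} Every projection onto $X$ has the form $Pg=\sum_i\phi_i(g)e_i$ with biorthogonal $\phi_i\in\tc(M)^{\ast}$, $\langle\phi_i,e_j\rangle=\delta_{ij}$, and since $X$ is isometrically $\ell_1^n$ one has $\|Pg\|_\tc=\sum_i|\phi_i(g)|=\sup_\epsilon\langle\sum_i\epsilon_i\phi_i,g\rangle$. Thus $\|P\|\le1$ iff $\lip(\sum_i\epsilon_i\phi_i)\le1$ for every $\epsilon$, i.e.\ iff the map $\Phi:=(\phi_1,\dots,\phi_n)\colon M\to\ell_1^n$ is $1$-Lipschitz; biorthogonality reads $\Phi(x_i)-\Phi(y_i)=d_ie_i$. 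So $1$-complementation is equivalent to building a single $1$-Lipschitz map $\Phi\colon M\to\ell_1^n$ with these prescribed jumps. This is the \emph{main obstacle}: $\ell_1^n$ is not injective, so the values on the $2n$ matched points need not extend. I plan to set $\Phi(x_i),\Phi(y_i)=\pm\tfrac12 d_ie_i+c_i$ with transverse centres $c_i$ chosen by linear programming, whose feasibility I expect to follow, via LP duality, from another alternating-cycle inequality guaranteed by minimality of the matching (this is where Edmonds' matching structure is really used).

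\emph{Nonexistence of $\ell_1^{n+1}$ under strict triangle inequalities.} Suppose $\tc(M)$ contained unit-norm completely unrelated problems $g_1,\dots,g_{n+1}$. For each $\epsilon\in\{-1,+1\}^{n+1}$, duality yields a $1$-Lipschitz $l_\epsilon$ norming every $\epsilon_ig_i$; by the equality case in Theorem~\ref{T:WitnLip}, each optimal plan for $\epsilon_ig_i$ moves mass only along edges $(p,q)$ with $l_\epsilon(p)-l_\epsilon(q)=d(p,q)$. Strict triangle inequalities forbid a directed length-two path among such tight edges, since $l_\epsilon(a)-l_\epsilon(b)=d(a,b)$ and $l_\epsilon(b)-l_\epsilon(c)=d(b,c)$ would force $l_\epsilon(a)-l_\epsilon(c)=d(a,b)+d(b,c)>d(a,c)$; hence under $l_\epsilon$ no point is at once a source and a sink. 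If a point $v$ lay in $\supp g_i\cap\supp g_j$ with $i\ne j$, I choose $\epsilon_i,\epsilon_j$ so that $v$ carries out-mass for $\epsilon_ig_i$ and in-mass for $\epsilon_jg_j$; then $v$ is simultaneously a source and a sink for the single function $l_\epsilon$, a contradiction. Therefore $\supp g_1,\dots,\supp g_{n+1}$ are pairwise disjoint, and as each contains at least two points (a nonzero function of zero sum), this forces $|M|\ge2(n+1)>2n$, the desired contradiction.
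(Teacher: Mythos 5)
Your arguments for the isometric copy of $\ell_1^n$ and for the nonexistence of $\ell_1^{n+1}$ are correct, and they are dual-side versions of what the paper does on the primal side: where the paper reroutes a hypothetical cheaper transportation plan along a cycle and compares with the minimum weight perfect matching, you certify optimality of the straightforward plan by producing the $1$-Lipschitz witnesses $l_\epsilon$ through a difference-constraint (no negative cycle) argument; where the paper splices two optimal plans through a common support point and shortcuts using the strict triangle inequality, you rule out a point that is simultaneously a source and a sink by the impossibility of a tight two-edge chain for a single norming function. Both of these are complete modulo routine details.

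The $1$-complementation part, however, has a genuine gap. You correctly reduce it to constructing a single $1$-Lipschitz map $\Phi=(l_1,\dots,l_n)\colon M\to\ell_1^n$ with $\Phi(u_i)-\Phi(v_i)=d(u_i,v_i)e_i$, i.e.\ to finding Lipschitz functions $l_i$ with $l_i(u_j)-l_i(v_j)=\delta_{ij}d(u_j,v_j)$ and $\sum_{i=1}^n|l_i(w)-l_i(z)|\le d(w,z)$ for all $w,z$, and you correctly flag that this does not follow from an extension theorem since $\ell_1^n$ is not injective. But at that point you only announce a plan (``whose feasibility I expect to follow \dots from another alternating-cycle inequality''), and the feasibility of the ansatz $\Phi(u_i),\Phi(v_i)=\pm\frac12 d(u_i,v_i)e_i+c_i$ is exactly the open issue, not a consequence of anything you have established. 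This is the hardest part of the theorem. The paper's construction goes through the linear programming formulation of minimum weight perfect matching over odd cuts: taking an optimal dual solution $\{y_C\}$ and setting $l_i(w)=\sum y_C$ over the cuts $C$ that contain $u_iv_i$ and separate $u_i$ from $w$, complementary slackness gives $|\mathcal{M}\cap C|=1$ for every active nontrivial cut, which makes the cut families used by distinct $l_i$ disjoint and yields $\sum_i|l_i(w)-l_i(z)|\le d(w,z)$ from the dual constraint $\sum_{C\ni e}y_C\le w(e)$. Even this requires a further nontrivial lemma, namely that for metric weights the optimal dual solution can be chosen with $y_C\ge 0$ on all odd cuts including the trivial ones; the paper proves it by tracking the Edmonds algorithm and using the triangle inequality to bound the dual adjustment step $t_3$. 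Nothing in your proposal supplies or replaces this construction, so the complementation claim remains unproved.
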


\begin{remark} It can be easily seen from the proof that in the
case where a finite metric space $M$ contains more than $2n$
elements, the space $\tc(M)$ also contains a $1$-complemented
subspace isometric to $\ell_1^n$. This is not completely obvious
only if $|M|$ is odd. In this case we add to $M$ one point in an
arbitrary way, apply Theorem \ref{T:2npts}, and then observe that
all elements of standard basis of the constructed space, except
one, are contained in $\tc(M)$.
\end{remark}

Theorem \ref{T:2npts} solves \cite[Problem 3.3]{DKO18+} by
strengthening \cite[Theorem 3.1]{DKO18+} which states that for $M$
with $2n$ elements the space $\tc(M)$ contains a $2$-complemented
subspace $2$-isomorphic to $\ell_1^n$.
\medskip

Problems of isometric embeddability of $\ell_1$ into $\tc(M)$ on
infinite metric spaces $M$ were considered in \cite{CJ17,OO19}.
\medskip

The existing knowledge on embeddability of $\ell_\infty^n$ is very
limited. The most important sources in this direction are
\cite{Bou86} and \cite{GK03}. In Section \ref{S:ellinftyn} we
present a special case of one of the results of \cite{GK03} in the
form which, in our opinion, helps to understand the phenomenon.
Bourgain \cite{Bou86} proved (see also a presentation in
\cite[Section 10.4]{Ost13}) that $\tc(\ell_1)$ contains almost
isometric copies of $\ell_\infty^n$ for all $n$.\medskip

Our contribution to the case of $\ell_\infty^n$ (Section
\ref{S:ellinftyn}) consists in examples of relatively small finite
metric spaces $M_3$ and $M_4$ such that $\tc(M_3)$ and $\tc(M_4)$,
respectively, contain $\ell_\infty^3$ and $\ell_\infty^4$
isometrically. The reason for our interest to $M_3$ is that it is
smaller than $M_4$. We do not know whether such finite metric
spaces can be constructed for $\ell_\infty^n$ with $n\ge 5$.
\medskip

 In this connection
it is natural to recall the well-known fact that the spaces
$\ell_1^2$ and $\ell_\infty^2$ are isometric. It is easy to see
that the standard proof of this can be stated as:

\begin{observation}\label{O:Dim2} The transportation problems $f_1$ and $f_2$
are completely unrelated if and only if the transportation
problems $g_1=\frac12(f_1+f_2)$ and $g_2=\frac12(f_1-f_2)$ are
completely intertwined.
\end{observation}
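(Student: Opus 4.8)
The plan is to reduce everything to the classical planar isometry $\ell_1^2\cong\ell_\infty^2$ realized by the ``$45^\circ$'' change of variables $(a_1,a_2)\mapsto(a_1+a_2,a_1-a_2)$, carried out inside the two-dimensional subspace of $\tc(M)$ spanned by $f_1,f_2$. This is the same as the span of $g_1,g_2$, since $f_1=g_1+g_2$ and $f_2=g_1-g_2$. The entire argument rests on the two elementary scalar identities
\begin{equation*}
|s+t|+|s-t|=2\max(|s|,|t|)\qquad\text{and}\qquad\max(|s+t|,|s-t|)=|s|+|t|,
\end{equation*}
which are precisely the $\ell_1$--$\ell_\infty$ duality in the plane; both follow by checking the finitely many sign patterns of $s,t$.

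The only delicate point, which I would settle first, is the normalization. Both notions refer to the normalized vectors $x_i=f_i/\|f_i\|_\tc$ and $y_i=g_i/\|g_i\|_\tc$, and since $g_1=\frac12(\|f_1\|_\tc x_1+\|f_2\|_\tc x_2)$ is proportional to $x_1+x_2$ exactly when $\|f_1\|_\tc=\|f_2\|_\tc$, the clean correspondence between the two bases lives in the equal-norm case; indeed the factor $\frac12$ is tailored to be the correct normalizing constant there. I would therefore take the $f_i$ (equivalently the $x_i$) to be unit vectors, which is the intended reading. Under complete unrelatedness this gives $\|g_1\|_\tc=\frac12\|f_1+f_2\|_\tc=1$ and $\|g_2\|_\tc=\frac12\|f_1-f_2\|_\tc=1$, so $g_1,g_2$ are already unit vectors and $y_i=g_i$; symmetrically, complete intertwinedness of unit vectors $g_1,g_2$ forces $\|f_1\|_\tc=\|g_1+g_2\|_\tc=1$ and $\|f_2\|_\tc=\|g_1-g_2\|_\tc=1$.

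With the normalizations settled, each implication is a single substitution. For the forward direction, from $\|a_1f_1+a_2f_2\|_\tc=|a_1|+|a_2|$ I get
\begin{equation*}
\|c_1g_1+c_2g_2\|_\tc=\tfrac12\|(c_1+c_2)f_1+(c_1-c_2)f_2\|_\tc=\tfrac12\big(|c_1+c_2|+|c_1-c_2|\big)=\max(|c_1|,|c_2|),
\end{equation*}
which is complete intertwinedness of $g_1,g_2$. For the converse, from $\|c_1g_1+c_2g_2\|_\tc=\max(|c_1|,|c_2|)$ together with $f_1=g_1+g_2$ and $f_2=g_1-g_2$ I get
\begin{equation*}
\|a_1f_1+a_2f_2\|_\tc=\|(a_1+a_2)g_1+(a_1-a_2)g_2\|_\tc=\max(|a_1+a_2|,|a_1-a_2|)=|a_1|+|a_2|,
\end{equation*}
which is complete unrelatedness of $f_1,f_2$.

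The geometric heart of the statement is thus completely elementary: the two scalar identities do all the work. The one place demanding care, and the main obstacle, is the normalization bookkeeping of the previous paragraph, namely verifying that $\|g_1\|_\tc=\|g_2\|_\tc$ (equivalently $\|f_1\|_\tc=\|f_2\|_\tc$) so that the linear change of variables genuinely maps the normalized $\ell_1$-basis to the normalized $\ell_\infty$-basis; this is also what singles out $\frac12$ as the right coefficient.
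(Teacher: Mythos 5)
Your proof is correct and is exactly the standard $\ell_1^2\cong\ell_\infty^2$ change-of-variables argument that the paper alludes to but does not write out (it states the Observation as a reformulation of ``the standard proof'' and gives no further details). Your attention to the normalization is well placed: since both definitions are phrased in terms of $x_i=f_i/\|f_i\|_\tc$, the statement with the specific coefficient $\tfrac12$ does implicitly require $\|f_1\|_\tc=\|f_2\|_\tc$ (otherwise, e.g., $\|f_1\|_\tc=2$, $\|f_2\|_\tc=1$ gives $\|y_1+y_2\|_\tc=\tfrac43\neq 1$), and reading the $f_i$ as unit vectors, as you do, is the intended interpretation.
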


\section{Proof of Theorem \ref{T:2npts}}\label{S:ell1_n}

We use terminology of \cite{Die17}. Consider the metric space $M$
as a weighted complete graph with $2n$ elements, we denote it also
$G=(V(G),E(G))$, the weight of an edge is the distance between its
ends. We consider matchings containing $n$ edges in this graph,
such matchings are called {\it perfect matchings} or {\it
$1$-factors}. We pick among all perfect matchings a matching of
minimum weight (the {\it weight} of a matching is defined as the
sum of weights of its edges). Let $e_1=u_1v_1, \dots, e_n=u_nv_n$
be a perfect matching of minimum weight. We claim that the
transportation problems
$f_1=\1_{u_1}-\1_{v_1},\dots,f_n=\1_{u_n}-\1_{v_n}$ are completely
unrelated.

We need to show that for any set $\{a_i\}_{i=1}^n$ of real numbers
we have
\[\left\|\sum_{i=1}^n
a_i\left(\1_{u_i}-\1_{v_i}\right)\right\|_\tc=\sum_{i=1}^n|a_i|d(u_i,v_i).\]
Assume for simplicity that all $a_i$ are positive (all other cases
can be done similarly, we can just interchange $u_i$ and $v_i$ for
those $i$ for which $a_i<0$).

The inequality
\[\left\|\sum_{i=1}^n
a_i\left(\1_{u_i}-\1_{v_i}\right)\right\|_\tc\le\sum_{i=1}^n|a_i|d(u_i,v_i)\]
is obvious. To prove the inverse inequality, assume the contrary,
that is,
\[\left\|\sum_{i=1}^n
a_i\left(\1_{u_i}-\1_{v_i}\right)\right\|_\tc<\sum_{i=1}^n|a_i|d(u_i,v_i).\]
In such a case there exist transportation plans for
$f=\sum_{i=1}^n a_i\left(\1_{u_i}-\1_{v_i}\right)$ with lower
costs than the straightforward plan (by the {\it straightforward
plan} we mean the plan in which $a_i$ units are moved from $u_i$
to $v_i$ for each $i=1,\dots,n$). Let
\begin{equation}\label{E:Opt}\sum_{j=1}^mb_j(\1_{x_j}-\1_{y_j}),\end{equation} where $b_j>0$ for
$j=1,\dots,m$, be an optimal plan for $f$, that is, a plan
satisfying
\[||f||_\tc=\sum_{j=1}^mb_jd(x_j,y_j).\]
It is known (see \cite[Proposition 3.16]{Wea18}) that such plans
exist and that there exists an optimal plan satisfying the
following condition:
\[{\bf (A)}\quad \hbox{Each }x_j\hbox{ is one of
}\{u_i\}_{i=1}^n\hbox{ and each }y_j\hbox{ is one of
}\{v_i\}_{i=1}^n.\]

Since plan \eqref{E:Opt} is different from the straightforward
plan and satisfies condition {\bf (A)}, there are
$n(0),n(1)\in\{1,\dots,n\}$, $n(0)\ne n(1)$ such that some amount
$c_0>0$ of the product is moved according to \eqref{E:Opt} from
$u_{n(0)}$ to $v_{n(1)}$. Then, there exist
$n(2)\in\{1,\dots,n\}$, $n(2)\ne n(1)$ such that some amount
$c_1>0$ of the product is moved according to \eqref{E:Opt} from
$u_{n(1)}$ to $v_{n(2)}$. We continue in an obvious way. Since we
consider finite sets, there is $k< n$ such that
$n(k+1)\in\{n(0),n(1),\dots,n(k)\}$. Without loss of generality we
may assume (changing the notation if necessary) that
$n(k+1)=n(0)$.

Let $c=\min_{0\le i\le k}c_i$. Then $c>0$ and part of the
transportation done according to the plan \eqref{E:Opt} is: $c$
units of the product are moved
\begin{itemize}
\item From $u_{n(0)}$ to $v_{n(1)}$, \item From $u_{n(1)}$ to
$v_{n(2)}$, \item \dots,\item From $u_{n(k)}$ to $v_{n(0)}$.
\end{itemize}

It is clear that if we modify this part of the plan to: $c$ units
of the product are moved
\begin{itemize}
\item From $u_{n(0)}$ to $v_{n(0)}$, \item From $u_{n(1)}$ to
$v_{n(1)}$, \item \dots,\item From $u_{n(k)}$ to $v_{n(k)}$,
\end{itemize}
we get another transportation plan for $f$.
\medskip

To clarify the main idea of the proof, first we consider the case
where $\{u_iv_i\}_{i=1}^n$ is a unique perfect matching of the
minimum weight, that is, all other perfect matchings have strictly
larger weights.

In this case we show that the cost of the modified (two paragraphs
above) transportation plan is strictly smaller than the cost of
\eqref{E:Opt}, and get a contradiction with the assumption that
\eqref{E:Opt} is an optimal plan.

To show this it suffices to prove that
\begin{equation}\label{E:(*)}
\sum_{i=0}^kcd(u_{n(i)},v_{n(i)})<\sum_{i=0}^kcd(u_{n(i)},v_{n(i+1)}),
\end{equation}
recall that $n(k+1)=n(0)$. Inequality \eqref{E:(*)} is an
immediate consequence of the assumption that the perfect matching
$\{u_iv_i\}_{i=1}^n$ has a strictly smaller weight than the
perfect matching
\[\{u_{n(i)}v_{n(i+1)}\}_{i=1}^k\bigcup\{u_iv_i\}_{i\in
R},\quad\hbox{ where }R=\{1,\dots,n\}\backslash
\{n(0),n(1),\dots,n(k)\},\] so the proof is completed under the
additional assumption of the uniqueness of the minimum weight
perfect matching.

Let us turn to the general case. In this case we can claim only a
non-strict inequality in \eqref{E:(*)}. This does not lead to an
immediate contradiction, but we can finish the argument in the
following way. Since $c>0$, the non-strict version of
\eqref{E:(*)} proves the following lemma.

\begin{lemma}\label{L:More} If an optimal transportation plan for $f$ satisfies {\bf (A)} and does not coincide with
the straightforward plan, then we can construct another optimal
transportation plan satisfying {\bf (A)} in which the total amount
of the product which is moved as in the straightforward plan, that
is, from $u_i$ to $v_i$ is strictly larger.
\end{lemma}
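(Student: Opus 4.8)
The plan is to take the optimal plan \eqref{E:Opt} (which satisfies {\bf (A)} and differs from the straightforward plan), perform the cycle rerouting already isolated above, and verify that the result is a \emph{second} optimal plan satisfying {\bf (A)} with strictly more straightforward flow. Concretely, I would fix the cycle $u_{n(0)}\to v_{n(1)},\,u_{n(1)}\to v_{n(2)},\dots,u_{n(k)}\to v_{n(0)}$, set $c=\min_{0\le i\le k}c_i>0$, and replace the $c$ units of flow along each cycle edge $u_{n(i)}\to v_{n(i+1)}$ by $c$ units along the straightforward edge $u_{n(i)}\to v_{n(i)}$. Because rerouting $c$ units around a closed cycle leaves the net supply/demand at every vertex unchanged, the result is again a transportation plan for $f$; and since all new edges have their source among $\{u_i\}$ and their sink among $\{v_i\}$, condition {\bf (A)} is preserved.

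First I would establish that the modified plan is again optimal. Its cost differs from that of \eqref{E:Opt} only along the cycle, the difference being $c\sum_{i=0}^k\big(d(u_{n(i)},v_{n(i)})-d(u_{n(i)},v_{n(i+1)})\big)$. The non-strict version of \eqref{E:(*)} shows this difference is $\le 0$: indeed $\{u_{n(i)}v_{n(i+1)}\}_{i=0}^{k}\cup\{u_iv_i\}_{i\in R}$ is a perfect matching, so its weight is at least that of the minimum-weight matching $\{u_iv_i\}_{i=1}^n$, and cancelling the common edges $\{u_iv_i\}_{i\in R}$ yields $\sum_{i=0}^k d(u_{n(i)},v_{n(i)})\le\sum_{i=0}^k d(u_{n(i)},v_{n(i+1)})$. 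Hence the modified plan costs no more than \eqref{E:Opt}; since \eqref{E:Opt} is optimal, the two costs coincide and the modified plan is optimal as well.

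Finally I would track the straightforward flow. The rerouting removes $c$ units from each cross edge $u_{n(i)}\to v_{n(i+1)}$ (with $n(i+1)\ne n(i)$, so these contribute nothing to the total amount transported along the edges $u_i\to v_i$) and adds $c$ units to each straightforward edge $u_{n(i)}\to v_{n(i)}$. As the indices $n(0),\dots,n(k)$ are distinct, the total straightforward flow increases by exactly $c(k+1)>0$, which is the strict increase claimed in the lemma.

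The genuinely load-bearing step is the passage from ``feasible'' to ``optimal'' for the modified plan, which rests entirely on the non-strict inequality and hence on the minimality of the chosen perfect matching; without minimality one would obtain only a feasible, possibly more expensive, plan. The remaining points---preservation of {\bf (A)}, feasibility of the cycle reroute, and the exact bookkeeping of the $c(k+1)$ increase---are routine once the cycle has been isolated, as it already has been in the discussion preceding the lemma.
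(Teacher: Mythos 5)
Your proof is correct and follows essentially the same route as the paper: reroute $c=\min_i c_i$ units around the isolated cycle onto the straightforward edges, use the non-strict version of \eqref{E:(*)} (coming from the minimality of the perfect matching) to conclude the modified plan is still optimal, and note the straightforward flow grows by $c(k+1)>0$. The paper leaves these verifications implicit ("the non-strict version of \eqref{E:(*)} proves the following lemma"), whereas you spell them out; no discrepancy.
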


With this lemma we can complete the proof in the general case as
follows. Consider optimal transportation plans for $f$ satisfying
the condition {\bf (A)}. Such plans can be regarded as $n\times n$
matrices with nonnegative entries in which the entry $s_{i,j}$ is
the amount of the product which is to be moved from $u_i$ to
$v_j$. It is clear that the set of such optimal plans is closed in
any usual topology on the set of matrices. If it contains the
straightforward plan, we are done. If it does not, we get a
contradiction as follows. It is easy to check that among all
optimal plans satisfying condition {\bf (A)} there is a plan for
which the sum $\sum_{i=1}^n s_{i,i}$ is the maximal possible. If
this plan does not coincide with the straightforward, then, by
Lemma \ref{L:More}, there is an optimal plan satisfying {\bf (A)}
for which the sum $\sum_{i=1}^ns_{i,i}$ is larger, contrary to the
maximality assumption. This contradiction proves the existence in
$\tc(M)$ of the subspace isometric to $\ell_1^n$.
\medskip

Now, assume that $M$ is such that all triangle inequalities in $M$
are strict. Let $f_1,\dots, f_k$  be completely unrelated
transportation problems on $M$.

\begin{lemma} The functions $f_i$ have disjoint supports.
\end{lemma}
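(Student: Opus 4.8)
The plan is to argue by contradiction, localizing to a single shared point and then exploiting the strict triangle inequalities through a flow-rerouting argument. First I would record the \emph{unnormalized} form of complete unrelatedness: substituting $a_i=b_i\|f_i\|_\tc$ into the defining identity of Definition and recalling $x_i=f_i/\|f_i\|_\tc$ shows that $f_1,\dots,f_k$ being completely unrelated is equivalent to
\[\left\|\sum_{i=1}^k b_if_i\right\|_\tc=\sum_{i=1}^k|b_i|\,\|f_i\|_\tc\]
for all reals $b_i$. In particular any subcollection of completely unrelated problems is again completely unrelated, so it suffices to prove that any two of them, say $f_i$ and $f_j$, have disjoint supports.

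Suppose toward a contradiction that some $v\in M$ lies in $\supp(f_i)\cap\supp(f_j)$. I would choose signs $\ep_i,\ep_j\in\{-1,+1\}$ so that $\ep_if_i(v)>0$ and $\ep_jf_j(v)<0$ (take $\ep_i=\sign f_i(v)$ and $\ep_j=-\sign f_j(v)$), and set $g=\ep_if_i+\ep_jf_j$. By the unnormalized identity above, $\|g\|_\tc=\|f_i\|_\tc+\|f_j\|_\tc$. Concatenating an optimal plan for $\ep_if_i$ with an optimal plan for $\ep_jf_j$ yields a plan for $g$ whose cost equals $\|f_i\|_\tc+\|f_j\|_\tc=\|g\|_\tc$; hence this combined plan is itself optimal for $g$. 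We may assume each of these optimal plans moves mass only between distinct points, since any self-loop has zero cost and may be discarded.

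The decisive step is the local structure at $v$. In the $\ep_if_i$-plan the flow balance at $v$ equals $\ep_if_i(v)>0$, so a positive amount is shipped out of $v$ to some point $t\ne v$; in the $\ep_jf_j$-plan the balance at $v$ equals $\ep_jf_j(v)<0$, so a positive amount is shipped into $v$ from some point $s\ne v$. Let $\delta>0$ be the smaller of these two amounts. I would reroute: decrease the shipments $s\to v$ and $v\to t$ each by $\delta$ and add a shipment $s\to t$ of size $\delta$. This is again an admissible plan for $g$, because the flow balance at $s$, at $t$, and at $v$ is unchanged; its cost changes by $\delta\,(d(s,t)-d(s,v)-d(v,t))$. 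Since $s\ne v$ and $t\ne v$, strictness of the triangle inequalities makes this quantity strictly negative (in the degenerate case $s=t$ the expression is $-d(s,v)-d(v,t)<0$), contradicting optimality of the combined plan. Hence no such $v$ exists and the supports are pairwise disjoint.

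I expect the main obstacle to be the bookkeeping that guarantees the rerouting is simultaneously \emph{admissible} and \emph{strictly} cost-decreasing: one must arrange, via the sign choice, that the shared point $v$ emits mass (from one summand) while absorbing mass (from the other), and one must rule out the degenerate coincidences $s=v$, $t=v$, and $s=t$ that could neutralize the gain. These are precisely the points at which strictness of the triangle inequalities is essential, and where the hypothesis on $M$ enters.
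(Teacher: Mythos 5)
Your proof is correct and follows essentially the same route as the paper's: assume a common point $v$ in the supports, normalize signs so that $v$ is a source for one problem and a sink for the other, and splice the two optimal plans by rerouting $\delta$ units directly from $s$ to $t$, using the strict triangle inequality to beat the cost $\|f_i\|_\tc+\|f_j\|_\tc$ forced by complete unrelatedness. Your explicit handling of the degenerate case $s=t$ is a small bonus of care over the paper's parenthetical remark, but the argument is the same.
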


This lemma is essentially known \cite[Lemma 3.3]{OO19}, for
convenience of the reader we provide a proof.

\begin{proof} Assume the contrary, let $v\in M$ be in the supports of both $f_i$ and
$f_j$, $i\ne j$. Without loss of generality we assume that
$f_i(v)>0$  and $f_j(v)<0$, changing signs of $f_i$ and $f_j$ if
needed (the change of signs does not affect complete
unrelatedness).

To get a contradiction it suffices to show that
$\|f_i+f_j\|_\tc<\|f_i\|_\tc+\|f_j\|_\tc$. This can be done in the
following way. In an optimal plan for $f_i$ some amount of units,
denote it $\alpha>0$, is moved from $v$ to some $u\in M$. In an
optimal plan for $f_j$ some amount of units, denote it $\beta>0$,
is moved to $v$ from some $w\in M$ ($w$ can be the same as $u$).

Let $\gamma=\min\{\alpha,\beta\}$. Now we combine the optimal
plans for $f_i$ and $f_j$ with the following exception: we move
$\gamma$ units of the product directly from $w$ to $u$. Since, by
our assumption, $d(w,u)<d(w,v)+d(v,u)$, the cost of the obtained
plan is $<\|f_i\|_\tc+\|f_j\|_\tc$.
\end{proof}

Finally, since support of each function $f_i$ contains at least
two points, we get that $k\le n$. This proves the last statement
of Theorem \ref{T:2npts}.
\medskip

It remains to show that there is a projection of norm $1$ onto the
subspace spanned by $\{f_i\}_{i=1}^n$. We show that a linear
operator $P$ is a norm-$1$ projection onto the subspace spanned by
$\{\1_{u_i}-\1_{v_i}\}_{i=1}^n$ if and only if it can be
represented in the form
\begin{equation}\label{E:P}P(f)=\sum_{i=1}^n l_i(f)\frac{f_i}{\|f_i\|_\tc},\end{equation}
where:

\begin{itemize}

\item $f_i=\1_{u_i}-\1_{v_i}$

\item $l_i$ are Lipschitz functions, and
$l_i(f_j)=\delta_{i,j}\|f_j\|_\tc=\delta_{i,j}d(u_j,v_j)$
($\delta_{i,j}$ is the Kronecker delta).

\item $\|Pf\|_\tc\le \|f\|_\tc$ for every $f\in\tc(M)$ of the form
$f=\1_w-\1_z$ for $w,z\in M$.

\end{itemize}

Since $\{f_i\}_{i=1}^n$ are linearly independent and the dual of
$\tc(M)$ is the space of the Lipschitz functions on $M$, which
take value $0$ at the base point (see \cite[Theorem 10.2]{Ost13}),
any projection onto the subspace spanned by $\{f_i\}_{i=1}^n$ is
of the form \eqref{E:P} for some Lipschitz functions $\{l_i\}$
satisfying
$l_i(f_j)=\delta_{i,j}\|f_j\|_\tc=\delta_{i,j}d(u_j,v_j)$.
\medskip

It remains to show the condition $\|Pf\|_\tc\le \|f\|_\tc$ for
$f\in\tc(M)$ of the form $f=\1_w-\1_z$ implies that $\|P\|\le 1$.
This follows from our definitions and observations made above: In
fact, since for any $g\in\tc(M)$ there exists a transportation
plan of minimal cost, we can represent $g$ as a sum
$g=\sum_{i=1}^m g_i$, where $g_i$ are of the form
$g_i=b_i(\1_{w_i}-\1_{z_i})$, $b_i\in\mathbb{R}$, and
$\|g\|_\tc=\sum_{i=1}^m\|g_i\|_\tc$. Therefore we get
\[\|Pg\|_\tc=\left\|P\left(\sum_{i=1}^m g_i\right)\right\|_\tc\le\sum_{i=1}^m\|Pg_i\|_\tc\le \sum_{i=1}^m\|g_i\|_\tc=\|g\|_\tc,
\]
and thus $\|P\|\le 1$.
\medskip

Our approach to the construction of suitable functions $l_i$ is
based on the Duality Theorem of Linear Programming and the Edmonds
\cite{Edm65} algorithm for the minimum weight perfect matching
problem. We use the description of the algorithm in the form given
in \cite[Theorem 9.2.1]{LP09}, where it is shown that the minimum
weight perfect matching problem on a complete graph $G$ with even
number of vertices and weight $w:E(G)\to \mathbb{R}$, $w\ge 0$,
can be reduced to the following linear program. (An {\it odd cut}
in $G$ is the set of edges joining a subset of $V(G)$ of odd
cardinality with its complement, a {\it trivial odd cut} is a set
of edges joining one vertex with its complement. If $x$ is a
real-valued function on $E(G)$ and $A$ is a set of edges, we write
$x(A)=\sum_{e\in A}x(e)$.)
\medskip

\begin{itemize}

\item {\bf (LP1)} minimize $w^\top \cdot x$  (where $x:E(G)\to
\mathbb{R}$)
\medskip

\item subject to

\begin{itemize}

\item[(1)] $x(e)\ge 0$ for each $e\in E(G)$

\item[(2)] $x(C)=1$ for each trivial odd cut $C$

\item[(3)] $x(C)\ge 1$ for each non-trivial odd cut $C$.

\end{itemize}
\end{itemize}

We  introduce a variable $y_C$ for each odd cut $C$.
\medskip

The dual program of the program {\bf (LP1)} is:

\begin{itemize}

\item {\bf (LP2)} maximize $\sum_C y_C$

\item subject to

\begin{itemize}

\item[(D1)] $y_C\ge 0$ for each non-trivial odd cut $C$

\item[(D2)] $\sum_{C~ {\rm containing~ }e} y_C\le w(e)$ for every
$e\in E(G)$.

\end{itemize}
\end{itemize}

The Duality in Linear Programming \cite[Section 7.4]{Sch86} (see
also a summary in \cite[Chapter 7]{LP09}) states that the optima
{\bf (LP1)} and {\bf (LP2)} are equal. (In the general case we
need to require the existence of vectors satisfying the
constraints and finiteness of one of the optima.)
\medskip

This means that the total length of the minimum weight perfect
matching coincides with the sum of entries of the optimal solution
of the dual problem.
\medskip

We complete our proof of the existence of norm-$1$ projection $P$
of the desired form by proving the following two lemmas.

\begin{lemma}\label{L:IfPos} Suppose that there is an optimal dual solution satisfying
$y_C\ge 0$ for all odd cuts $C$ {\bf including trivial ones}. Then
there exist functions $l_i$ for which $P$ defined by \eqref{E:P}
is a norm-$1$ projection.
\end{lemma}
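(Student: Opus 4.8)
The plan is to reduce the lemma to the construction of a single family of $1$-Lipschitz functions, and then to read that family directly off the optimal dual solution.

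First I would make precise what ``$P$ has norm $1$'' amounts to. By the discussion preceding the lemma, every projection onto the span of $\{f_i\}$ has the form \eqref{E:P} with $l_i(f_j)=\delta_{i,j}d(u_j,v_j)$, and the only thing left to verify is $\|Pf\|_\tc\le\|f\|_\tc$ on atoms $f=\1_w-\1_z$. Since the $f_i$ are completely unrelated, $\left\|\sum_i c_i f_i/\|f_i\|_\tc\right\|_\tc=\sum_i|c_i|$; applying this with $c_i=l_i(f)=l_i(w)-l_i(z)$ and using $\|\1_w-\1_z\|_\tc=d(w,z)$, the desired bound becomes exactly
\[\sum_{i=1}^n|l_i(w)-l_i(z)|\le d(w,z)\qquad\text{for all }w,z\in M.\]
Thus the whole problem is to produce functions $l_1,\dots,l_n$ obeying this single ``$\ell_1$-contraction'' inequality together with the interpolation conditions $l_i(u_i)-l_i(v_i)=d(u_i,v_i)$ and $l_i(u_j)-l_i(v_j)=0$ for $j\ne i$.

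Next I would extract structure from the dual program {\bf (LP2)}. For an odd cut $C$ let $\rho_C(x,y)=1$ if the edge $xy$ lies in $C$ and $\rho_C(x,y)=0$ otherwise. Because $G$ is complete, constraint (D2) holds for every pair, i.e. $\sum_C y_C\,\rho_C(x,y)\le d(x,y)$ for all $x,y\in M$. By the Duality Theorem the total $\sum_C y_C$ equals the weight $\sum_i d(u_i,v_i)$ of the minimum matching, and complementary slackness at the matching edges (which carry $x(e_i)=1>0$ in the primal) makes (D2) tight there: $\sum_{C\ni e_i}y_C=d(u_i,v_i)$.

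The heart of the matter — and the step I expect to be the main obstacle — is the structural claim that every cut carrying positive dual weight crosses exactly one matching edge. Here I would use two facts: an odd cut always meets an odd, hence at least one, of the edges of a perfect matching; and, summing the tight slackness identities, $\sum_i d(u_i,v_i)=\sum_C y_C\,|\{i:e_i\in C\}|$, which by duality also equals $\sum_C y_C$. Subtracting gives $\sum_C y_C\bigl(|\{i:e_i\in C\}|-1\bigr)=0$ with every summand $\ge0$ — and this is precisely where the hypothesis $y_C\ge0$ for \emph{all} cuts, including the trivial ones, is indispensable. Hence each $C$ with $y_C>0$ meets a single matching edge, so the support of $y$ partitions into disjoint families $\mathcal C_i=\{C:y_C>0,\ e_i\in C\}$ with $\sum_{C\in\mathcal C_i}y_C=d(u_i,v_i)$. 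Then I would set $l_i=\sum_{C\in\mathcal C_i}y_C\,g_C$, where $g_C$ is the indicator of the side of $C$ containing $u_i$. The interpolation conditions are immediate: for $C\in\mathcal C_i$ one has $g_C(u_i)-g_C(v_i)=1$, giving $l_i(u_i)-l_i(v_i)=\sum_{C\in\mathcal C_i}y_C=d(u_i,v_i)$, while for $j\ne i$ the edge $e_j$ meets no $C\in\mathcal C_i$, so $g_C(u_j)=g_C(v_j)$ and $l_i(u_j)-l_i(v_j)=0$. For the contraction inequality the triangle inequality and $y_C\ge0$ give $|l_i(w)-l_i(z)|\le\sum_{C\in\mathcal C_i}y_C\,\rho_C(w,z)$ (in particular each $l_i$ is $1$-Lipschitz, being one term of the sum below); summing over $i$ and using that the $\mathcal C_i$ are disjoint and exhaust the support of $y$, the right-hand side collapses to $\sum_C y_C\,\rho_C(w,z)\le d(w,z)$ by (D2). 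This yields $\sum_i|l_i(w)-l_i(z)|\le d(w,z)$, hence $\|P\|\le1$, completing the proof.
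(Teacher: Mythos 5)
Your argument is correct and is essentially the paper's own proof: your structural claim that every cut in the support of $y$ meets exactly one matching edge is precisely the paper's identity \eqref{E:Int1}, obtained from the same forced equality in the duality chain \eqref{E:ForOptDual} (which is also exactly where the hypothesis $y_C\ge 0$ for trivial cuts enters in the paper), and your functions $l_i=\sum_{C\in\mathcal{C}_i}y_C\,g_C$ coincide with the paper's \eqref{E:Def_l_i} up to sign and an additive constant. The verification of the interpolation conditions and of the $\ell_1$-contraction inequality via disjointness of the families $\mathcal{C}_i$ and constraint (D2) is likewise the same.
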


\begin{lemma}\label{L:PosExists} If the weight function $w:E(G)\to\mathbb{R}$ corresponds to a metric on $V(G)$ (this means that $w(uv)=d(u,v)$ for some
metric $d$ on $V(G)$), then there is an optimal dual solution
satisfying $y_C\ge 0$ for all odd cuts, including trivial ones.
\end{lemma}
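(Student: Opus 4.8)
The plan is to prove Lemma~\ref{L:PosExists}: when the weight function comes from a metric, there is an optimal dual solution (of {\bf (LP2)}) with $y_C\ge 0$ for \emph{all} odd cuts, including the trivial ones. The starting point is that by the Duality Theorem already invoked in the excerpt, an optimal dual solution exists and achieves the minimum matching weight $\sum_{i=1}^n d(u_i,v_i)$. The constraints (D1) already guarantee $y_C\ge 0$ for nontrivial odd cuts, so the only thing that can go wrong is that some trivial odd cuts $C_v$ (the cut separating a single vertex $v$ from the rest) receive negative weight $y_{C_v}<0$. The goal is to modify any given optimal dual solution into one in which all $y_{C_v}\ge 0$, while preserving both feasibility and optimality.

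\medskip

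First I would fix an optimal dual solution $\{y_C\}$ and isolate the trivial-cut variables, writing $y_v:=y_{C_v}$ for each vertex $v$. The key structural fact I expect to exploit is the triangle inequality hidden in the metric: for an edge $e=uv$, the constraint (D2) reads $y_u+y_v+\sum_{C \text{ nontrivial},\, e\in C} y_C\le d(u,v)$, and the nontrivial part is nonnegative, so the trivial contributions are controlled by $d(u,v)$. The natural move is a \emph{potential shift}: replace each $y_v$ by $y_v+t_v$ for a suitable vector $(t_v)_{v\in V(G)}$ and compensate on the nontrivial cuts so as to keep the objective $\sum_C y_C$ unchanged. Because every edge $e=uv$ lies in exactly the two trivial cuts $C_u,C_v$ and in some nontrivial cuts, shifting $y_u\mapsto y_u+t$ adds $t$ to the left side of (D2) for every edge incident to $u$; the simplest way to neutralize this is to show one can always drive each $y_v$ up to $0$ by absorbing the change into the nontrivial cut variables or by using the slack created by strict triangle inequalities and the metric structure.

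\medskip

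Concretely, the cleanest route is to argue directly about the geometry of optimal dual solutions via complementary slackness. By LP duality, an optimal primal $x$ (the minimum-weight perfect matching, which is $0/1$ valued on edges) and the optimal dual $\{y_C\}$ satisfy complementary slackness: (D2) is tight for every matching edge $e_i=u_iv_i$, i.e. $\sum_{C\ni e_i}y_C=d(u_i,v_i)$, and $y_C=0$ for every nontrivial cut $C$ with $x(C)>1$. I would then show that whenever some $y_v<0$, one can raise $y_v$ toward $0$ and simultaneously lower the $y$-values of an appropriately chosen family of odd cuts containing the matched partner, keeping all constraints satisfied and the objective fixed; the metric (triangle) inequality is exactly what is needed to guarantee that the compensating decreases can be carried out on nontrivial cuts without violating their nonnegativity (D1). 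Iterating this finitely many times, or equivalently taking the optimal dual solution that \emph{lexicographically maximizes} the vector of trivial-cut values, yields a solution with all $y_v\ge 0$.

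\medskip

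The main obstacle I anticipate is the bookkeeping of the compensation step: when I raise a negative $y_v$, I must specify precisely which nontrivial odd cuts absorb the change and verify that (a) each such cut still satisfies $y_C\ge 0$, (b) every edge constraint (D2) is preserved, and (c) the objective $\sum_C y_C$ does not decrease below the optimum. The delicate point is that an edge can belong to many nontrivial cuts, so the shift at $u$ interacts with many constraints at once; making the argument clean will likely require either restricting attention to the laminar family of odd cuts produced by Edmonds' algorithm (so that the cuts containing a given edge form a nested chain, greatly simplifying the accounting) or else appealing to the extreme-point structure of the dual polyhedron. I expect the laminar-family structure from \cite[Theorem 9.2.1]{LP09} to be the right tool, since it reduces the global compensation to a local, vertex-by-vertex adjustment where the triangle inequality closes the argument.
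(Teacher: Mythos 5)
There is a genuine gap: the entire content of the lemma is the ``compensation step'' that you explicitly defer. You correctly observe that (D1) already handles the nontrivial cuts and that the only issue is a trivial cut with $y_{C_v}<0$, and you correctly sense that the triangle inequality and the laminar structure of the cuts must enter; but the claim that ``whenever some $y_v<0$, one can raise $y_v$ toward $0$ and simultaneously lower the $y$-values of an appropriately chosen family of odd cuts \dots keeping all constraints satisfied and the objective fixed'' is precisely what has to be proved, and it is not at all automatic. Note the quantitative tension: raising $y_v$ by $\ep$ increases the left-hand side of (D2) by $\ep$ on \emph{every} edge incident to $v$, so for each tight such edge you must find other cuts containing that edge whose values can be decreased; doing this edge-by-edge with the trivial cuts $C_u$ at the other endpoints would decrease the objective by roughly $(|V(G)|-1)\ep$ against a gain of $\ep$, destroying optimality. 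So a repair of an \emph{arbitrary} optimal dual solution requires a careful global construction that your sketch does not supply, and it is not clear that ``lexicographically maximizing the trivial-cut values'' over the optimal face yields nonnegativity without essentially redoing that construction.

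The paper sidesteps this entirely by not starting from an arbitrary optimal dual. It runs Edmonds' algorithm (in the form of \cite[Section 9.2]{LP09}) and proves that nonnegativity of the trivial-cut duals is an \emph{invariant of the algorithm}: at each dual-adjustment step only the cuts $\nabla(S_j)$ with $S_j\in A(G_y')$ are decreased, each by $t=\min\{t_1,t_2,t_3\}$, and $t_1$ already protects the non-singleton $S_j$. For a singleton $S_j=\{v\}$, the positive surplus condition in the Gallai--Edmonds decomposition guarantees two tight edges $uv$, $wv$ into distinct components $T_{i_1}$, $T_{i_2}$; the tightness equations \eqref{E:uv} and \eqref{E:wv} express $w(uv)$ and $w(wv)$ in terms of $y_{\nabla(v)}$, and the triangle inequality $w(uw)\le w(uv)+w(vw)$ then yields $t\le t_3\le y_{\nabla(v)}$, so the updated value stays nonnegative. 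If you want to complete your proof along your own lines, you would need either to carry out this inductive argument (which is the paper's route) or to produce an explicit, verified exchange argument on the optimal face of the dual polyhedron; as written, the proposal identifies the obstacle but does not overcome it.
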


\begin{proof}[Proof of Lemma \ref{L:IfPos}] Let $\mathcal{M}$ be the minimum weight perfect
matching, then $e\in\mathcal{M}$ is of the form $u_iv_i$. We
introduce the function $l_i:V(G)\to\mathbb{R}$ by

\begin{equation}\label{E:Def_l_i}
l_i(w)=\begin{cases} 0~~&\hbox{ if } w=u_i\\
\sum_{C~{\rm contains}~u_iv_i~{\rm and ~separates }~u_i~{\rm and}~
w} y_C ~~&\hbox{ if } w\ne u_i.
\end{cases}
\end{equation}

We claim that the function $l_i$ has the following desired
properties:

\begin{enumerate}

\item $l_i$ is $1$-Lipschitz.

\item $l_i(v_i)-l_i(u_i)=d(v_i,u_i)$.

\item $l_i(v_j)-l_i(u_j)=0$ if $j\ne i$.

\item $\sum_{i=1}^n |l_i(w)-l_i(z)|\le d(w,z)$ for every $w,z\in
M=V(G)$.

\end{enumerate}

The discussion following \eqref{E:P} implies that these conditions
imply that the obtained $P$ is a norm-$1$ projection.
\medskip

Proofs of 1--4:

\begin{enumerate}

\item $|l_i(w)-l_i(z)|\le\sum_{C~{\rm separates }~w~{\rm and}~ z}
y_C\le w(wz)=d(w,z)$, where in the first inequality we used the
definition of $l_i$, in the second we used (D2). Observe also that
item 1 follows from the stronger inequality in item 4, which we
prove below.

\item $l_i(v_i)-l_i(u_i)=d(v_i,u_i)$.

The corresponding argument is shown in \cite[p.~371]{LP09}. We
reproduce it. We have

\begin{equation}\label{E:ForOptDual} w(\mathcal{M})=\sum_{e\in\mathcal{M}} w(e)\ge \sum_{e\in
\mathcal{M}}~~\sum_{C~ {\rm containing~ }e}
y_C=\sum_{C}|\mathcal{M}\cap C|y_C\ge\sum_C y_C,\end{equation}
where in the first inequality we used (D2) and in the second
inequality we used $|\mathcal{M}\cap C|\ge 1$ for each odd cut.

If $y_C$ is an optimal dual solution, we get that the leftmost and
the rightmost sides in \eqref{E:ForOptDual} coincide, and
therefore \begin{equation}\label{E:WeighAtt} w(e)=\sum_{C~ {\rm
containing~ }e}y_C\end{equation} for each $e\in\mathcal{M}$ and

\begin{equation}\label{E:Int1} |\mathcal{M}\cap C|=1 \hbox{ for each non-trivial odd cut }C
\hbox{ satisfying }y_C>0\end{equation}
\medskip

Equality \eqref{E:WeighAtt} implies $l_i(v_i)-l_i(u_i)=\sum_{C~
{\rm containing~ }u_iv_i}y_C-0=w(u_iv_i)=d(u_i,v_i)$.

\item $l_i(v_j)-l_i(u_j)=0$ if $j\ne i$.

This equality follows from \eqref{E:Int1}. In fact, equality
\eqref{E:Int1} implies that none of the cuts with $y_C>0$
containing $u_iv_i$ can contain $u_jv_j$ for $j\ne i$, and thus
$l_i(v_j)= l_i(u_j)$ for all $j\ne i$.
\medskip

\item $\sum_{i=1}^n |l_i(w)-l_i(z)|\le d(w,z)$ for every $w,z\in
M$.

To prove this inequality we observe that $|l_i(w)-l_i(z)|\le
\sum_{C\in S_i(w,z)} y_C$, where $S_i(w,z)$ is the set of cuts $C$
with $y_C>0$ which simultaneously separate $u_i$ from $v_i$ and
$w$ from $z$. It is important to observe that \eqref{E:Int1}
implies that the sets $\{S_i(w,z)\}_{i=1}^n$ are disjoint.
Therefore, by (D2), $\sum_{i=1}^n |l_i(w)-l_i(z)|\le d(w,z)$.
\end{enumerate}
\end{proof}

\begin{proof}[Proof of Lemma \ref{L:PosExists}] We follow the
presentation in \cite[Section 9.2]{LP09} of the Edmonds algorithm
for construction of an optimal dual solution. To prove the lemma
it suffices to show that the assumption that $w$ corresponds to a
metric implies that when we run the algorithm we maintain $y_C\ge
0$ in each step, even for trivial odd cuts.

We decided not to copy the whole Section 9.2 at a price that we
expect readers (who do not remember the algorithm) to have
\cite[Section 9.2]{LP09} handy.

The beginning of the algorithm can be described as follows: we
assign the number $y_C=\frac12\min_{u,v}d(u,v)$ to all trivial
cuts $C$ and set $y_C=0$ for all nontrivial cuts $C$. This
function on the set of all odd cuts satisfies the conditions (D1)
and (D2). Such functions are called {\it dual solutions}. For a
dual solution $y$ we form a graph $G_y$ whose vertex set is $V(G)$
and edge set is defined by
\[E_y=\left\{e\in E(G):\sum_{C~ {\rm containing~ }e}
y_C=w(e)\right\}.\]

It is clear that with $y_C$ defined as above we get a graph $G_y$
which can contain any number of edges between $1$ and
$\frac{n(n-1)}2$.

In each step of the Edmonds algorithm we construct not only the
function $y_C$, but also a set $\mathcal{H}$ of odd cardinality
subsets of $V(G)$ satisfying four conditions listed in
\cite[(P-1)--(P-4), page 372]{LP09}. We list only the first two
conditions, because the contents of the last two conditions does
not affect our modification of the argument in \cite[Section
9.2]{LP09}.
\medskip

(P-1) $\mh$ is nested, that is, if $S,T\in\mh$, then either
$S\subset T$ or $T\subset S$ or $S\cap T=\emptyset$.
\medskip

(P-2) $\mh$ contains all singletons of $V(G)$.
\medskip

At the end of the first step described above the set $\mh$ is let
to be the set of singletons (and all of the desired conditions are
satisfied).
\medskip

After that the following step is repeated and the function $y_C$
is modified till the graph $G_y'$ (described below) becomes a
graph having perfect matching.

Let $S_1,\dots,S_k$ be the (inclusionwise) maximal members of
$\mh$. It follows from (P-1) that $S_1,\dots,S_k$ are mutually
disjoint and from (P-2) that they form a partition of $V(G)$. Let
$G_y'$ denote the graph obtained from $G_y$ by contracting each
$S_i$ to a single vertex $s_i$. Since $|V(G)|$ is even, but $S_j$
is odd, it follows that $k:=|V(G_y')|$ is even.

Suppose that $G_y'$ does not have a perfect matching. Let
$A(G_y')$, $C(G_y')$, and $D(G_y')$ be the sets of the
Gallai-Edmonds decomposition for $G_y'$ (see \cite[Section
3.2]{LP09}).

We use the notation $A(G_y')=\{s_1,\dots,s_m\}$ and denote the
components of the subgraph of $G_y'$ induced by $D(G_y')$ by
$H_1,\dots,H_{m+d}$, where $d$ is the number of vertices which are
not matched in a maximum matching in $G_y'$. Let
\[T_i=\bigcup_{s_j\in V(H_i)}S_j.\]
%It is clear that $|T_i|$ is odd.

Now we modify the dual solution $y$ as follows (by $\nabla(S)$ we
denote the set of edges connecting a vertex set $S$ with its
complement):

\[y^t_{\nabla (S_j)}=y_{\nabla (S_j)}-t\quad (1\le j\le m),\]
\[y^t_{\nabla (T_i)}=y_{\nabla (S_i)}+t\quad (1\le i\le m+d),\]
\[y^t_C=y_C, \quad \hbox{otherwise}.\]

In this formula $t$ is chosen as the minimum of three numbers,
$t_1,t_2,t_3$, defined as:

\[t_1=\min\{y_{\nabla(S_j)}:~1\le j\le m, |S_j|>1\},\]
\[t_2=\min\{w(e)-\sum_{e\in
C}y_C:~e\in\nabla(T_1)\cup\dots\cup\nabla(T_{m+d})\backslash(\nabla(S_1)\cup\dots\cup\nabla(S_m))\},\]
\[t_3=\frac12\min\{w(e)-\sum_{e\in
C}y_C:~e\in(\nabla(T_i)\cap\nabla(T_j)),~1\le i<j\le m+d\}.\]

It is clear from the definition of $t_1$ that negative
coefficients can appear only for those $S_j$ which are singletons.
So suppose that $S_j$ is a singleton, $S_j=\{v\}$. To complete the
proof of Lemma \ref{L:PosExists} it remains to show that $t_3 \le
y_{\nabla(v)}$, and so $y^t_{\nabla(v)}$ is still nonnegative.

Because of the positive surplus condition in \cite[Theorem 3.2.1
(c)]{LP09}, the vertex $v$ is connected in $G_y$ with at least two
of the sets $\{T_i\}_{i=1}^{m+d}$, suppose that these are sets
$T_{i_1}$ and $T_{i_2}$. Let $u\in T_{i_1}$ and $w\in T_{i_2}$ be
adjacent to $v$ in $G_y$. Let $\{U_p\}_{p=1}^\tau$ be the elements
of $\mh$ containing $u$ and let $\{W_q\}_{q=1}^\sigma$ be the
elements of $\mh$ containing $w$. Since the edges $uv$ and $wv$
are in $G_y$, we have
\begin{equation}\label{E:uv}w(uv)=y_{\nabla(v)}+\sum_{p=1}^\tau
y_{\nabla U_p}\end{equation}
\begin{equation}\label{E:wv}
w(wv)=y_{\nabla(v)}+\sum_{q=1}^\sigma y_{\nabla W_q}\end{equation}
On the other hand, the definition of $t_3$ and our choice of
$S_1,\dots,S_k$ imply that
\[\begin{split}t_3&\le\frac12\left(w(uw)-\sum_{p=1}^\tau y_{\nabla U_p}-\sum_{q=1}^\sigma y_{\nabla
W_q}\right)\\&\le\frac12\left(\left(w(uv)-\sum_{p=1}^\tau
y_{\nabla U_p}\right)+\left(w(vw)-\sum_{q=1}^\sigma y_{\nabla
W_q}\right)\right)\\&=y_{\nabla(v)},\end{split}\] where in the
second inequality we use the triangle inequality for the distance
corresponding to weight $w$, and in the last equality we use
\eqref{E:uv} and \eqref{E:wv}.
\end{proof}

\section{Isometric copies of $\ell_\infty^n$ in $\tc(M)$}
\label{S:ellinftyn}

As is well-known the spaces $\{\ell_\infty^n\}$ admit
low-distortion and even isometric embeddings into some
transportation cost spaces. This follows from the basic property
of $\tc(M)$: it contains an isometric copy of $M$ (see
\eqref{E:EmbBasePt}).
\medskip

Another related fact is the following immediate consequence of the
Bourgain discretization theorem (see \cite{Bou87}, \cite{GNS12},
\cite[Section 9.2]{Ost13}): for sufficiently large $m$ the
transportation cost space on the set of integer points in
$\ell_\infty^n$ with absolute values of coordinates $\le m$
contains an almost-isometric copy of $\ell_\infty^n$.
\medskip

In the next example we need the following well-known fact (see
\cite[Section 3.3]{Wea18}, \cite[Section 1.6]{OO19}): If $(M,d)$
is a complete metric space, then $\tc(M)$ contains the vector
space of differences between finite positive compactly supported
measures $\mu$ and $\nu$ on $M$ with the same total masses and
$\|\mu-\nu\|_\tc$ is equal to the quantity
$\mathcal{T}_1(\mu,\nu)$ defined in the following way.

A {\it coupling} of a pair of finite positive Borel measures
$(\mu,\nu)$ with the same total mass on $M$ is a Borel measure
$\pi$ on $M\times M$ such that $\mu(A)=\pi(A\times M)$ and
$\nu(A)=\pi(M\times A)$ for every Borel measurable $A\subset M$.
The set of couplings of $(\mu,\nu)$ is denoted $\Pi(\mu,\nu)$. We
define
$$
\mathcal{T}_1(\mu,\nu):= \inf_{\pi\in
\Pi(\mu,\nu)}\bigg(\iint_{M\times M} d(x,y)\,d\pi(x,y)\bigg).
$$

The result of Godefroy and Kalton \cite[Theorem 3.1]{GK03} has the
following special case:

\begin{example} Let us consider the following (non-discrete)
transportation problems on the unit cube $[0,1]^n$ with its
$\ell_\infty$-distance:

$P_i$: ``available'' is the Lebesgue measure on the face $x_i=0$,
``needed'' is the Lebesgue measure on the face $x_i=1$.
\end{example}

It is clear that $P_i$ has cost $1$, and actually any
measure-preserving transportation from bottom to top does the job.
The easiest transportation plan is to move each point from the the
face $x_i=0$ to the point with the same coordinates, changing only
$x_i$ from $0$ to $1$.
\medskip

It is not that easy to see that $\sum_{i=1}^n\theta_iP_i$  has
cost $1$. This can be done as follows. By symmetry it suffices to
consider the case where all $\theta_i=1$. In this case we move
each point from the surface with ``availability'' to the surface
with ``need'' in the direction of the diagonal $(1,\dots,1)$. It
is easy to see that it will be a bijection between points of
``availability'' and ``need''. The cost can be computed as the
following integral:

\[\begin{split}n\int_0^1 t(-d(1-t)^{n-1})&=n(n-1)\int_0^1 t(1-t)^{n-2}dt\\&=
n(n-1)\int_0^1((1-t)^{n-2}-(1-t)^{n-1})dt\\&
=n(n-1)\left.\left(\frac{(1-t)^n}n-\frac{(1-t)^{n-1}}{n-1}\right)\right|_0^1=1.
\end{split}\]

We are interested in constructing finite metric spaces $M$ for
which $\tc(M)$ contains $\ell_\infty^n$ isometrically. So far we
succeeded to do this only for $n=3$ and $n=4$ (the case $n=2$ is
easy, see Observation \ref{O:Dim2}).

\begin{example}[Finite $M$ with $\tc(M)$ containing
$\ell_\infty^3$ isometrically] The set $M$ which we consider is a
subset of the surface of the cube $[0,1]^3$ endowed with its
$\ell_\infty$ distance. Transportation problem $P_i$ is described
in the following way: ``available'' is $\frac16$ at each midpoint
of the edge in the face $x_i=0$ and $\frac13$ at the center of the
face; ``needed'' is at the similar points with $x_i=1$.
\end{example}

The transportation cost for $P_i$ is $1$ - just shift from $x_i=0$
to $x_i=1$. Again by symmetry it suffices to show that the cost of
$P_1+P_2+P_3$ is $1$.\medskip

Consider faces with $x_i=0$ as colored ``red'' and faces with
$x_i=1$ as colored ``blue''. It is clear that availability and
need on two-dimensional faces which are on the boundary between
blue and red cancel each other. There will be $6$ points of
availability left. Three of them are on edges, and three  are
centers of faces. The value is $\frac13$ at each. So to achieve
cost $1$ it suffices to match red and blue vertices in such a way
that the distance between any two matched vertices is $\frac12$.

This is possible. To achieve this we match red points which are
centers of edges with blue vertices which are centers of faces and
red points which are centers of faces with blue vertices which are
centers of edges.\medskip

Our example in dimension $4$ is even more symmetric.
\medskip

\begin{example}[Finite $M$ with $\tc(M)$ containing
$\ell_\infty^4$ isometrically] The set $M$ which we consider is a
subset of the surface of the cube $[0,1]^4$ endowed with its
$\ell_\infty$ distance. Transportation problem  $P_i$ is described
in the following way: ``available'' is $\frac16$ at the center of
each of each $2$-dimensional face of the face $x_i=0$; ``needed''
is at the similar points with $x_i=1$.
\end{example}

The transportation cost for $P_i$ is $1$ - just shift from $x_i=0$
to $x_i=1$. Again by symmetry it suffices to show that the cost of
$P_1+P_2+P_3+P_4$ is $1$.\medskip

As in the above discussion with blue and red we see that half of
the availability and need will cancel each other.
\medskip

The remaining availability of value $\frac13$ will be concentrated
at $6$ centers of 2-dimensional faces of 3-dimensional faces. Each
of these centers will have coordinates $\frac12,\frac12,1,1$ in
some order. The need of value $\frac13$ will concentrated at $6$
points with coordinates $\frac12,\frac12,0,0$. Cancellation will
occur at points with coordinates $\frac12,\frac12,0,1$. \medskip

To get the transportation plan of cost $1$ we need to find a
matching between points with coordinates  $\frac12,\frac12,1,1$
and points with coordinates $\frac12,\frac12,0,0$, such that the
distance between each pair of matched vertices is $\frac12$. Such
matching is obvious.
\medskip

\section*{Acknowledgement}

The contribution of the first two authors is the outcome of their
research done under the supervision of the third author. The
authors gratefully acknowledge the support by the National Science
Foundation grant NSF DMS-1700176. The first author was supported
by GAAP. The authors thank the referee for corrections and
suggestions which helped to improve the presentation. The
third-named author also would like to thank Florin Catrina and
Beata Randrianantoanina for useful discussions. During the final
stage of the work on this paper the third author was on research
leave supported by St. John's University.

\begin{small}

\textsc{Department of Mathematics and Computer Science, St. John's
University, 8000 Utopia Parkway, Queens, NY 11439, USA} \par
  \textit{E-mail address}: \texttt{seychelle.khan16@my.stjohns.edu} \par
\medskip

\textsc{Department of Mathematics and Computer Science, St. John's
University, 8000 Utopia Parkway, Queens, NY 11439, USA} \par
  \textit{E-mail address}: \texttt{mutasim.mim16@my.stjohns.edu} \par
\medskip

\textsc{Department of Mathematics and Computer Science, St. John's
University, 8000 Utopia Parkway, Queens, NY 11439, USA} \par
  \textit{E-mail address}: \texttt{ostrovsm@stjohns.edu} \par

\end{small}

\end{large}
\end{document}